\newcommand{\bD}{\mathds{D}}
\newcommand{\dN}{\mathds{N}}
\newcommand{\dR}{\mathds{R}}
\newcommand{\dt}{\, {\rm d}t}
\newcommand{\eoh}{{^1\!/_2\hspace{0.6mm}}}
\newcommand{\Hdiv}{H({\rm div}; \Omega)}
\newcommand{\HNdiv}{H_N({\rm div}; \Omega)}
\newcommand{\Wpm}{H^2(\O)}
\newcommand{\K}{\mathds{K}}
\newcommand{\kds}{\ell}
\newcommand{\mE}{\mathcal{E}}
\newcommand{\mEa}{{\mathcal{E}^j}}
\newcommand{\mEb}{{\mathcal{E}^j_{\partial \Omega}}}
\newcommand{\mEi}{{\mathcal{E}^j_\Omega}}
\newcommand{\mS}{\mathcal{S}}
\newcommand{\mT}{\mathcal{T}}
\newcommand{\nablah}{\nabla_{\! h\,}}
\newcommand{\OT}{\Omega_T}
\newcommand{\pd}{{\mbox{$s$}}}
\newcommand{\pds}{{\mbox{\scriptsize $s$}}}
\newcommand{\pO}{\partial \Omega}
\newcommand{\RT}{{\rm RT}}
\newcommand{\sC}{\mathscr{C}}
\newcommand{\sD}{\mathscr{D}}
\renewcommand{\div}{{\rm div}\,}
\renewcommand{\O}{\Omega}
\renewcommand{\tilde}{\widetilde}
\newtheorem{theorem}{Theorem}
\newtheorem{definition}{Definition}
\newtheorem{numexample}{Numerical Example}
\begin{document}

\title{\bf Stable Crank-Nicolson Discretisation for Incompressible Miscible Displacement Problems\\ of Low Regularity}
\author{Max Jensen\footnote{Mathematical Sciences, University of Durham, England, {\tt m.p.j.jensen@durham.ac.uk}}
, R\"udiger M\"uller\footnote{WIAS, Berlin, Germany, {\tt mueller@wias-berlin.de}}}
\maketitle

\begin{abstract}
In this article we study the numerical approximation of incompressible miscible displacement problems with a linearised Crank-Nicolson time discretisation, combined with a mixed finite element and discontinuous Galerkin method. At the heart of the analysis is the proof of convergence under low regularity requirements. Numerical experiments demonstrate that the proposed method exhibits second-order convergence for smooth and robustness for rough problems.
\end{abstract}

\section{Introduction and Initial Boundary Value Problem}

Mathematical models which describe the miscible displacement of fluids are of particular economical relevance in the recovery of oil in underground reservoirs by fluids which mix with oil. They also play a significant role in CO$_2$ stratification.

This publication extends the analysis of \cite{BartelsJensenMueller08}, which studies the discretisation of miscible displacement under low regularity. Unlike to \cite{BartelsJensenMueller08} which is based on a first-order implicit Euler time-step (leading to a nonlinear system of equations in each time step), here we examine the discretisation in time by a linearised second-order Crank-Nicolson scheme. Crucially, the new, more efficient method inherits stability under low regularity. Like in \cite{BartelsJensenMueller08}, the concentration equation is approximated with a discontinuous Galerkin method, while Darcy's law and the incompressibility condition is formulated as a mixed method. High-order time-stepping for miscible displacement under low regularity has recently also been addressed in \cite{RiviereWalkington09}, however, with a continuous Galerkin discretisation in space and discontinuous Galerkin in time. We refer for an outline of the general literature to \cite{BartelsJensenMueller08,Chen07,Feng02,RiviereWalkington09}.

\begin{definition}[Weak Formulation]
A triple $(u,p,c)$ in
\[
L^\infty(0,T; \HNdiv) \times L^\infty(0,T; L^2_0(\Omega)) \times \bigl( L^2(0,T; H^1(\Omega)) \cap H^1(0,T; \Wpm^*) \bigr)
\]
is called weak solution of the incompressible miscible flow problem if
\begin{enumerate}
\item[{\rm (W1) \hspace*{-6mm}}] \hspace*{6mm} for $t \in (0,T)$, $v \in \HNdiv$ and $q \in L^2_0(\Omega)$
\begin{eqnarray*}
\bigl(\mu(c) \, \K^{-1} u, v \bigr) - \bigl(p, \div v \bigr) &=& \bigl(\rho(c) \, g, v \bigr)\\
\bigl(q, \div u \bigr) &=& \bigl(q^I - q^P, q \bigr).
\end{eqnarray*}
\item[{\rm (W2) \hspace*{-6mm}}] \hspace*{6mm} for all $w \in \sD(0,T; \Wpm)$
\[
\int_0^T - \bigl( \phi \, c, \partial_t w \bigr) + \bigl(\bD(u) \nabla c, \nabla w \bigr) + \bigl(u \cdot \nabla c, w \bigr) + \bigl(q^I c, w \bigr) - \bigl(\hat{c} q^I, w \bigr) {\rm d}t = 0.
\]
\item[{\rm (W3) \hspace*{-6mm}}] \hspace*{6mm} $c(0,\cdot) = c_0$ in $\Wpm^*$.
\end{enumerate}
\end{definition}
For the data qualification we refer to condition (A1)--(A8) in \cite{BartelsJensenMueller08} and for the physical interpretation of the system  to \cite{BartelsJensenMueller08,Chen07,Feng02}. We point out that $\bD$ growths proportionally with $u$:
\[
d_{\circ} (1+|u|) |\xi|^{2} \le \xi^{\sf T} \, \bD(u, x) \, \xi \le d^{\circ} (1+|u|) |\xi|^{2}, \qquad u, \xi \in \dR^{d}, \; x \in \Omega.
\]
Thus $\bD$ is in general unbounded on Lipschitz domains $\Omega$ and in the presence of discontinuous coefficients, which are permitted in this paper.

\section{The Finite Element Method}

We compactly recall the definition of the finite element spaces from \cite{BartelsJensenMueller08}. Let $0 = t_0 <t_1< \ldots<t_M = T$ be a partition of the time interval $[0,T]$. Let $k_j := t_j-t_{j-1}$ and $d_t a^{j} := k_j^{-1} \bigl( a^j - a^{j-1} \bigr)$. We consider meshes $\mT$ of $\O$ with elements $K$ and set $h_K := {\rm diam}(K)$. We denote by $\mS^{\pds}(\mT)$ the space of elementwise polynomial functions of total or partial degree $\pd$. For $w_h \in \mS^{\pds}(\mT)$ the function $\nablah w_h$ is defined through $(\nablah w_h)|_K= \nabla (w_h|_K)$. The sets of interior and boundary faces are $\mE_\O(\mT)$ and $\mE_{\pO}(\mT)$. We set $\mE(\mT) = \mE_\O(\mT) \cup \mE_{\pO}(\mT)$ and assign to each $E \in \mE(\mT)$ its diameter $h_E$. We denote jump and the average operators by $[ \cdot ]$ and $\{ \cdot \}$. The concentration $c$ is discretised at time $j$ on the mesh $\mT^j_c$ or simply by $\mT^j$. The approximation space for the variable $c$ at time step $j$ is denoted by $\mS_c^j$. Often we abbreviate $\mEa := \mE(\mT^j_c)$, $\mEi := \mE_\Omega(\mT^j_c)$, $\mEb := \mE_{\pO}(\mT^j_c)$. We denote the Raviart-Thomas space of order $\ell$ by $\RT^\kds(\mT_u^j)$. The approximation spaces of $u$ and $p$ are $\mS_u^j := \RT^\kds(\mT_u^j) \cap \HNdiv$ and $\mS_p^j := \mS^\kds(\mT_u^j) \cap L^{2}_{0}(\O)$. We frequently use the global mesh size and time step $h^j := \max_{K \in \mT^j_c \cup \mT^j_u} h_K$, $\tilde{h} := \max_{0 \le j \le M} h^j$, $\tilde{k} := \max_{0 \le j \le M} k^j$ as well as to $\mS_u = \prod_{j = 1}^M \mS_u^j, \mS_p = \prod_{j = 1}^M \mS_p^j, \mS_c = \prod_{j = 0}^M \mS_c^j$. In addition we impose conditions (M1)--(M5) of \cite{BartelsJensenMueller08} which are on shape-regularity, boundedness of the polynomial degree, control $\| v_h \|_{L^4} \lesssim \| v_h \|_{H^1}$ and the structure of hanging nodes.

To deal with discontinuous coefficients and the time derivative, we substitute $\bD$ by
\[
\bD_h : L^2(\O)^d \to \mS^{\pds}(\mT_c, \dR^{d \times d}), \; v \mapsto \Pi_\mT \circ \bD(v, \cdot)
 \]
where the $\Pi_\mT$ are projections such that $\| \Pi_\mT \, D \|_K \lesssim \| D \|_K$. Given quantities $a^j$, $a^{j-1}$ and $a^{j-2}$ at times $t_j$, $t_{j-1}$, $t_{j-2}$, we denote $\overline{a}^j = {\textstyle \frac{1}{2}} a^j + {\textstyle \frac{1}{2}} a^{j-1}$ and $\breve{a} = {\textstyle \frac{3}{2}} a^{j-1} - {\textstyle \frac{1}{2}} a^{j-2}$.

The diffusion term of the concentration equation is discretised by the symmetric interior penalty discontinuous Galerkin method: Given $c_h, w_h \in \mS_c^j$, $u_h \in \mS_u^j$, we set
\begin{eqnarray*}
B_d(c_h, w_h; u_h) &:=& \bigr( \bD_h^j(u_h) \nablah c_h, \nablah w_h \bigr) - \bigl( [c_h], \{ \bD_h^j(u_h) \, \nablah w_h \} \bigr)_{\mEi}\hphantom{,}\\&& \quad - \, \bigl( [w_h], \{ \bD_h^j(u_h) \, \nablah c_h \} \bigr)_{\mEi} + \bigr( \sigma^2 [c_h], [w_h] \bigr)_\mEi
\end{eqnarray*}
where $\sigma$ is chosen sufficiently large to ensure coercivity of $B_d$, cf.~\cite{BartelsJensenMueller08}. The convection, injection and production terms are represented by
\begin{eqnarray} \label{defBcq}
B_{cq}( c_h, w_h;&& \!\!\!u_h) := \eoh \Bigl( \bigl(u_h \nablah c_h, w_h \bigr) - \bigl( u_h c_h, \nablah w_h \bigr) + \bigl( (\overline{q}^I + \overline{q}^P) c_h, w_h \bigr)\\
&& + \! \sum_{K \in \mT^j} \bigl( c_h^+, (u_h \cdot n_K)_+ \, [w_h]_K \bigr)_{\partial K \setminus \pO} - \bigl( (u_h \cdot n_K)_- \, [c_h]_K, w_h^+ \bigr)_{\partial K \setminus \pO} \Bigr), \nonumber
\end{eqnarray}
where $(u_h \cdot n)_+ := \max \{ u_h \cdot n, 0\}$ and $(u_h \cdot n)_- := \min \{ u_h \cdot n, 0\}$. We set $B = B_d + B_{cq}$.

{\sc Algorithm $\!(A^{\! dG})$.} {\em Choose $c_h^j \in \mS_c^j$ for $j=0,1$. Given $c_h^j$, find $(u_h^j, p_h^j) \in \mS_u^j \times \mS_p^j$ such that
\begin{equation}\label{scheme_1}
\begin{array}{ccccccc}
&\bigl( \mu(c_h^j) \, \K^{-1} u_h, v_h \bigr)& - &\bigl( p_h , \div v_h \bigr) & \! = \,& \, \bigl( \rho(c_h^j) \, g, v_h\bigr) ,& \\
&\bigl( q_h , \div u_h \bigr)& & &\!= \,& \, \bigl((q^I - q^P)^j,q_h \bigr). &
\end{array}
\end{equation}
For $2 \le j \le M$ find $c_h^j \in \mS_c^j$ such that, for all $w_h \in \mS_c^j$,
\begin{align} \label{scheme_2}
\bigl( \phi \, d_t c_h^j, w_h \bigr) + B(\overline{c_h}^j, w_h; \breve{u}_h^j) = \bigl(\overline{\hat{c}}^j {\overline{q}^I}^j, w_h \bigr)
\end{align}
and solve (\ref{scheme_1}) to obtain $(u_h^j, p_h^j) \in \mS_u^j \times \mS_p^j$.}

The algorithm only requires the solution of a {\em linear} system in each time step. The iterate $c_h^1$ can be computed with an implicit Euler method and fine time steps. The use of extrapolated values such as $\breve{u}_h^j$ is classical, e.g. see \cite[p.~218]{Thomee91}.

\section{Unconditional Well-posedness, Boundedness and Convergence} \label{sec_WPS}

Given $c_h^{j-1}$ and $c_h^{j-2}$, there exists a solution $c_h^j \in \mS_u^j$ of (\ref{scheme_2}) because the bilinear form $B$ is positive definite. For $t \in [t_{j-1}, t_j]$, let $\tilde{c}_h(t,\cdot) := \frac{t-t_{j-1}}{k_j} \, c_h^{j} + \frac{t_j - t}{k_j} \, c_h^{j-1}$. Then $\partial_t \tilde{c}_h(t, \cdot) = d_t c_h^j(\cdot)$. We interpret elements of $\mS_u$, $\mS_p$ and $\mS_c$ as time-dependent functions with stepwise constant values. Let
\begin{eqnarray*}
| c_h |_{\breve{u}_h}^2 &:=& \bigl( \bD_h(\breve{u}_h) \nablah c_h, \nablah c_h \bigr) + \bigl( \sigma^2 [c_h], [c_h] \bigr)_\mEi + \bigl( | \breve{u}_h \cdot n_\mEa| \; [c_h], [c_h] \bigr)_\mEi.
\end{eqnarray*}

\begin{theorem} \label{spatial_stab}
Let $\rho^\circ = \| \rho \|_\infty$. There exists a constant $C > 0$ such that
\begin{eqnarray} \label{bound_u}
\| \breve{u}_h^j \| + \|\div \breve{u}_h^j \| + \|\breve{p}_h^j\| \lesssim \bigl( \|\rho^\circ g \| + \|\breve{q}^I - \breve{q}^P \| \bigr)
\end{eqnarray}
holds for all $j =2,3 \ldots ,M$. Equally we have
\begin{eqnarray} \label{bound_c}
\|\phi^{1/2} c_h^j\|^2+ \int_{t_1}^{t_j} | \overline{c_h}|_{\breve{u}_h^j}^2 \dt \le \|\phi^{1/2} c_h^1\|^2 + \int_{t_1}^{t_j} \| \bigl({\overline{q}^I}^i\bigr)^{1/2} \, \overline{\hat{c}}^i \|^2 \dt
\end{eqnarray}
for all $j=2,3 \ldots, M$.
\end{theorem}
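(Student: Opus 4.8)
The first estimate \eqref{bound_u} is the classical stability bound for the mixed discretisation \eqref{scheme_1}, so I would argue level by level. For fixed $i\in\{0,\dots,M\}$ I test the first line of \eqref{scheme_1} with $v_h=u_h^i$ and the second with $q_h=p_h^i$ and add: the pressure terms cancel, and the uniform coercivity $(\mu(c_h^i)\,\K^{-1}v,v)\gtrsim\|v\|^2$ (independent of $c_h^i$ by the data assumptions of \cite{BartelsJensenMueller08}), together with $\|p_h^i\|\lesssim\|u_h^i\|+\|\rho^\circ g\|$ from the Raviart--Thomas inf-sup condition and Young's inequality, yields $\|u_h^i\|+\|p_h^i\|\lesssim\|\rho^\circ g\|+\|(q^I-q^P)^i\|$. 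Since $\div\mS_u^i\subseteq\mS_p^i$ and $\div u_h^i\in L^2_0(\Omega)$, the choice $q_h=\div u_h^i$ in the second line gives $\|\div u_h^i\|\le\|(q^I-q^P)^i\|$. Applying this at $i=j-1,j-2$ and using the triangle inequality for the extrapolant $\breve{u}_h^j=\tfrac32 u_h^{j-1}-\tfrac12 u_h^{j-2}$ (the divergence term admitting the sharp bound through $\div\breve{u}_h^j=P_{\mS_p}(\breve{q}^I-\breve{q}^P)$) delivers \eqref{bound_u}.

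For \eqref{bound_c} the natural test function in \eqref{scheme_2} is $w_h=\overline{c_h}^j=\tfrac12 c_h^j+\tfrac12 c_h^{j-1}\in\mS_c^j$. The Crank--Nicolson/midpoint structure makes the time-derivative term telescope exactly: $k_j(\phi\,d_t c_h^j,\overline{c_h}^j)=\tfrac12\|\phi^{1/2}c_h^j\|^2-\tfrac12\|\phi^{1/2}c_h^{j-1}\|^2$. Multiplying \eqref{scheme_2} by $k_i$, summing over $i=2,\dots,j$, and recalling that $\tilde{c}_h$ is the piecewise-linear time interpolant while the remaining quantities are stepwise constant (so the bilinear and data terms contribute their time integrals), I obtain the discrete energy identity
\[
\tfrac12\|\phi^{1/2}c_h^j\|^2+\sum_{i=2}^{j}k_i\,B(\overline{c_h}^i,\overline{c_h}^i;\breve{u}_h^i)=\tfrac12\|\phi^{1/2}c_h^1\|^2+\sum_{i=2}^{j}k_i\,(\overline{\hat{c}}^i\,{\overline{q}^I}^{\,i},\overline{c_h}^i).
\]

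The heart of the argument is to evaluate $B(\overline{c_h}^i,\overline{c_h}^i;\breve{u}_h^i)$ on the diagonal. In $B_{cq}$ the skew-symmetric volume part $(u\nablah c,c)-(uc,\nablah c)$ vanishes elementwise; the two one-sided upwind boundary sums, using $(u\cdot n)_+-(u\cdot n)_-=|u\cdot n|$ and adding the two contributions on each interior face, collapse to $\tfrac12(|\breve{u}_h^i\cdot n_\mEa|\,[\overline{c_h}^i],[\overline{c_h}^i])_\mEi$; and the reaction term $\tfrac12(({\overline{q}^I}^i+{\overline{q}^P}^i)\overline{c_h}^i,\overline{c_h}^i)$ is nonnegative. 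Combined with the SIPG coercivity $B_d(c,c;u)\ge\tfrac12\bigl[(\bD_h(u)\nablah c,\nablah c)+(\sigma^2[c],[c])_\mEi\bigr]$ (valid once $\sigma$ is large enough, cf.~\cite{BartelsJensenMueller08}), this gives $B(\overline{c_h}^i,\overline{c_h}^i;\breve{u}_h^i)-\tfrac12({\overline{q}^I}^i\overline{c_h}^i,\overline{c_h}^i)\ge\tfrac12|\overline{c_h}^i|_{\breve{u}_h^i}^2$. On the right-hand side I split by Cauchy--Schwarz and Young, $(\overline{\hat{c}}^i\,{\overline{q}^I}^{\,i},\overline{c_h}^i)\le\tfrac12\|({\overline{q}^I}^{\,i})^{1/2}\overline{\hat{c}}^i\|^2+\tfrac12({\overline{q}^I}^i\overline{c_h}^i,\overline{c_h}^i)$, absorb the second term into the left-hand side, and multiply through by $2$ to reach \eqref{bound_c} with precisely the stated constants.

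Up to these elementary manipulations the substance of the theorem is its \emph{unconditional} character, and that is the step I would flag as the crux: the extrapolated velocity $\breve{u}_h^j$ enters the concentration update only through terms that are either skew-symmetric — hence invisible on the diagonal — or sign-definite (the $\bD_h(\breve{u}_h^j)$-weighted diffusion and the $|\breve{u}_h^j\cdot n_\mEa|$-weighted jump penalty), so no CFL-type smallness of $k_j$ is needed despite the linearisation, and the seminorm $|\cdot|_{\breve{u}_h^j}$ is designed to be exactly what $B$ controls. Low regularity is harmless because the discretised diffusion $\bD_h$ keeps every term finite even when $\bD$ is unbounded and $u$ only lies in $L^2$. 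The one genuinely technical point is the mesh bookkeeping — making sense of $c_h^{j-1}$, of $d_t c_h^j$ and of $\overline{c_h}^j$ on the mesh $\mT^j$ under the admissible mesh changes and hanging-node structure of (M1)--(M5) — which is handled exactly as in \cite{BartelsJensenMueller08}.
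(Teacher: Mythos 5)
Your proposal is correct and follows essentially the same route as the paper: a classical inf-sup/coercivity argument for \eqref{scheme_1} transferred to the extrapolants by linearity, and the test function $w_h=\overline{c_h}^i$ in \eqref{scheme_2} yielding the exact telescoping identity $2(\phi\,d_t c_h^i,\overline{c_h}^i)=d_t\|\phi^{1/2}c_h^i\|^2$, the diagonal lower bound $2B(\overline{c_h}^i,\overline{c_h}^i;\breve{u}_h^i)\ge|\overline{c_h}^i|_{\breve{u}_h^i}^2+\|(\overline{q}^I+\overline{q}^P)^{1/2}\overline{c_h}^i\|^2$, and Cauchy--Schwarz with absorption of the $\overline{q}^I$ term before summing over $i$. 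Your write-up merely spells out in more detail the coercivity and upwind-face computations that the paper delegates to \cite{BartelsJensenMueller08}.
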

\begin{proof} The stability of $u^{j-1}$, $u^{j-2}$, $p^{j-1}$, $p^{j-2}$ follows from a classical inf-sup argument. This implies stability of $\breve{u}^j$ and $\breve{p}^j$. We choose $w_h = \overline{c_h}^i$ in (\ref{scheme_2}) to verify that
\begin{eqnarray*}
d_t \|\phi^{1/2} c_h^i\|^2 &+& | \overline{c_h}^i |_{\breve{u}_h^i}^2 + \| (\overline{q}^I + \overline{q}^P)^{1/2} \overline{c_h}^i \|^2 \le 2 \bigl( \phi \, d_t c_h^i, \overline{c_h}^i \bigr) + 2 B(\overline{c_h}^i, \overline{c_h}^i; \breve{u}_h^i) = 2 \bigl(\overline{\hat{c}}^i {\overline{q}^I}^i, \overline{c_h}^i \bigr).
\end{eqnarray*}
The Cauchy-Schwarz inequality, multiplication by $k_i$ and summation over $i$ give
\[\|\phi^{1/2} c_h^j\|^2+ \sum_{i=2}^j k_i  | \overline{c_h}^i |_{\breve{u}_h^i}^2  \le \|\phi^{1/2} c_h^1\|^2 + \sum_{i=2}^j k_i \| \bigl({\overline{q}^I}^i\bigr)^{1/2} \, \overline{\hat{c}}^i \|^2\]
for all $j=2,3,\ldots,M$.   \end{proof}

For simplicity the next theorem is stated assuming meshes are not adapted in time. For the extension to changing meshes consult \cite{BartelsJensenMueller08}. However, observe that that the discretisation with the implicit Euler method gives additional stability in $k_i \|\phi^{1/2} d_t c_h^i \|^2$, which allows to change meshes more rapidly.

\begin{theorem} \label{time_stab}
The time derivative $\partial_t \tilde{c}_h$ belongs to $L^2(t_1,T; \Wpm^*)$ and
\[
\| \partial_t \tilde{c}_h \|_{L^2(t_1,T; \Wpm^*)} = \| d_t c_h \|_{L^2(t_1,T; \Wpm^*)} \lesssim 1,
\]
independently of the mesh size and time step.
\end{theorem}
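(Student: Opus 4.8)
The plan is to test the scheme (\ref{scheme_2}) against an arbitrary $w \in \Wpm = H^2(\O)$ and to bound each of the resulting terms by the energy norms controlled by Theorem~\ref{spatial_stab}, together with the $L^2$-bound on $\breve u_h$ from (\ref{bound_u}). Since $d_t c_h^j = \partial_t \tilde c_h$ on $(t_{j-1},t_j)$ is $\mS_c^j$-valued, duality pairing against $w$ coincides with the $L^2$ inner product, but the point is to extract the $\Wpm^*$-norm, which is strictly weaker than $L^2$ and therefore allows us to absorb the DG consistency and jump terms despite the (possibly unbounded) diffusion $\bD$. From (\ref{scheme_2}),
\[
\bigl(\phi\, d_t c_h^j, w\bigr) = \bigl(\overline{\hat c}^j\,\overline q^{I,j}, w\bigr) - B_d(\overline{c_h}^j, w; \breve u_h^j) - B_{cq}(\overline{c_h}^j, w; \breve u_h^j),
\]
and since $\phi$ is bounded below, it suffices to bound $(\phi\, d_t c_h^j, w)$ by $\bigl(\text{something summable in }j\bigr)\,\|w\|_{H^2(\O)}$.

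First I would handle the right-hand side and the zeroth-order production/injection contributions: these are bounded by $\|\overline{\hat c}^j\overline q^{I,j}\|\,\|w\| + \|(\overline q^I+\overline q^P)^{1/2}\|_\infty \|(\overline q^I+\overline q^P)^{1/2}\overline{c_h}^j\|\,\|w\|$, and the factors involving $\overline{c_h}^j$ are square-summable after multiplication by $k_j$ by (\ref{bound_c}). Next, the convection terms in $B_{cq}$: after moving derivatives onto the smooth test function where needed, the volume terms are estimated by $\|\breve u_h^j\|_{L^4}\|\nablah\overline{c_h}^j\|\,\|w\|_{L^4}$ and $\|\breve u_h^j\|_{L^4}\|\overline{c_h}^j\|\,\|\nabla w\|_{L^4}$; using the embedding $H^2(\O)\hookrightarrow W^{1,4}(\O)$ and $\|\breve u_h^j\|_{L^4}\lesssim\|\div\breve u_h^j\| + \|\breve u_h^j\|\lesssim 1$ from (\ref{bound_u}) together with (M3), these reduce to $\|\nablah\overline{c_h}^j\|\,\|w\|_{H^2}$, again square-summable via $|\overline{c_h}^j|^2_{\breve u_h^j}$. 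The face terms $\bigl(c_h^+,(u_h\cdot n_K)_+[w]_K\bigr)$ vanish because $w\in H^2(\O)$ is continuous (no jump), and the remaining upwind face term is controlled by $\bigl(|\breve u_h^j\cdot n|[\overline{c_h}^j],[\overline{c_h}^j]\bigr)^{1/2}_{\mEi}$ times $\bigl(|\breve u_h^j\cdot n|\,w,w\bigr)^{1/2}_{\mEi}$, where a discrete trace/interpolation estimate and (\ref{bound_u}) bound the second factor by $\|w\|_{H^2}$.

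The main obstacle will be the DG diffusion term $B_d(\overline{c_h}^j, w;\breve u_h^j)$, precisely because $\bD$ — hence $\bD_h^j(\breve u_h^j)$ — is unbounded when $\breve u_h^j$ is large. The bulk term $\bigl(\bD_h^j(\breve u_h^j)\nablah\overline{c_h}^j,\nabla w\bigr)$ I would estimate by $|\overline{c_h}^j|_{\breve u_h^j}\,\bigl(\bD_h^j(\breve u_h^j)\nabla w,\nabla w\bigr)^{1/2}$ and then use the growth bound $\xi^{\sf T}\bD(u,x)\xi\le d^\circ(1+|u|)|\xi|^2$ together with $\|\Pi_\mT D\|_K\lesssim\|D\|_K$ to get $\bigl(\bD_h^j(\breve u_h^j)\nabla w,\nabla w\bigr)\lesssim\|(1+|\breve u_h^j|)^{1/2}\nabla w\|^2\lesssim \|\nabla w\|_{L^4}^2(1+\|\breve u_h^j\|_{L^2})$, which is $\lesssim\|w\|_{H^2}^2$ by (\ref{bound_u}) and Cauchy–Schwarz on the $L^2$–$L^2$ product $\int(1+|\breve u_h^j|)|\nabla w|^2$. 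For the consistency terms $\bigl([\overline{c_h}^j],\{\bD_h^j(\breve u_h^j)\nabla w\}\bigr)_{\mEi}$ and $\bigl([w],\{\cdots\}\bigr)_{\mEi}$: the second is zero since $[w]=0$; for the first, bound it by $\sigma^{-1}\bigl(\sigma^2[\overline{c_h}^j],[\overline{c_h}^j]\bigr)^{1/2}_{\mEi}$ times the weighted face norm of $\bD_h^j(\breve u_h^j)\nabla w$, and control that face norm by a discrete trace inequality (M-conditions) plus the growth of $\bD$ and (\ref{bound_u}) to reach $C\|w\|_{H^2}$. Finally, since each step contributes $\|d_t c_h^j\|_{\Wpm^*}\le C_j$ with $\sum_j k_j C_j^2\lesssim\|\phi^{1/2}c_h^1\|^2+\|(\overline q^I)^{1/2}\overline{\hat c}\|^2_{L^2(t_1,T;L^2)}\lesssim 1$ by Theorem~\ref{spatial_stab}, we conclude $\|d_t c_h\|_{L^2(t_1,T;\Wpm^*)}\lesssim 1$.
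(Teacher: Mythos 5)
Your overall strategy — test the scheme, bound each term by the quantities controlled in Theorem~\ref{spatial_stab} and by the velocity bound (\ref{bound_u}), then apply Cauchy--Schwarz in time — is the same as the paper's. But there is a genuine gap at the very first step: you substitute the continuous test function $w \in \Wpm$ directly into (\ref{scheme_2}) and write $\bigl(\phi\, d_t c_h^j, w\bigr) = \bigl(\overline{\hat c}^j\,{\overline{q}^I}^j, w\bigr) - B(\overline{c_h}^j, w; \breve u_h^j)$. The scheme holds only for $w_h \in \mS_c^j$, and the symmetric interior penalty form is not consistent pointwise in this sense, so this identity is not available. The paper instead replaces $w$ by a projection $w_h \in \mS_c^j$ (using $L^2$-orthogonality so that $\bigl(\phi\, d_t c_h^j, w\bigr) = \bigl(\phi\, d_t c_h^j, w_h\bigr)$ remains exact) and then estimates $B(\overline{c_h}^j, w_h; \breve u_h^j)$ with the \emph{discrete} test function. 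This invalidates every simplification you make on the grounds that $[w]=0$: the projected test function $w_h$ does have nonzero jumps, so the upwind face terms in $B_{cq}$ and the consistency term $\bigl([w_h],\{\bD_h^j(\breve u_h^j)\nablah \overline{c_h}^j\}\bigr)_{\mEi}$ do not vanish. Controlling them is precisely the content of the third estimate the paper imports from \cite{BartelsJensenMueller08}, namely $\| \sigma [w_h] \|_\mEi^2 \lesssim (1 + \| \breve{u}_h^j \|) \, \tilde{h}^{\eoh} \| w \|_{H^2(\O)}^2$, which exploits that $w_h$ approximates an $H^2$ function so its jumps are small; this ingredient is entirely absent from your argument and cannot be bypassed.

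A secondary issue: you invoke $\|\breve u_h^j\|_{L^4} \lesssim \|\div \breve u_h^j\| + \|\breve u_h^j\|$, but $\Hdiv$ does not embed into $L^4(\O)^d$, so this step fails. It is also unnecessary: the convection terms can be handled with only the $L^2$-bound on $\breve u_h^j$ by placing the test function in $L^\infty(\O)$ or $W^{1,4}(\O)$ via $H^2(\O)\hookrightarrow W^{1,4}(\O)\cap L^\infty(\O)$ and the discrete norm equivalence $\|v_h\|_{L^4}\lesssim\|v_h\|_{H^1}$ from the mesh conditions, which is how the quoted bound $B_{cq}(c_h^j, w_h; \breve{u}_h^j) \lesssim (1 + \|\breve{u}_h^j\|^\eoh ) \, | c_h^j |_{\mT^j} \, (\| \nablah w_h \| + \| w_h \|_{L^4(\Omega)} + \| \sigma [w_h] \|_\mEi)$ distributes the exponents. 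Your treatment of the bulk diffusion term and of the right-hand side, and the concluding time integration, are in line with the paper.
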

\begin{proof} Let $w_h \in \mS_c^j$. We recall from \cite{BartelsJensenMueller08}
\begin{align*}
B_d(c_h^j, w_h; \breve{u}_h^j) & \lesssim (1 + \| \breve{u}_h^j \|^\eoh) \, | c_h^j |_{\mT^j} \, (\| \nablah w_h \|_{L^4(\Omega)} + \| w_h \|_{L^4(\Omega)} + \| \sigma [w_h] \|_\mEi),\\
B_{cq}(c_h^j, w_h; \breve{u}_h^j) & \lesssim (1 + \|\breve{u}_h^j\|^\eoh ) \, | c_h^j |_{\mT^j} \, (\| \nablah w_h \| + \| w_h \|_{L^4(\Omega)} + \| \sigma [w_h] \|_\mEi),\\
\| \sigma [w_h] \|_\mEi^2 & \lesssim (1 + \| \breve{u}_h^j \|) \, \tilde{h}^{\eoh} \| w \|_{H^2(\O)}^2.
\end{align*}
With $L^2$-orthogonality and
\begin{eqnarray*} \label{GO} \nonumber
\int_{t_1}^T \bigl( \phi \, d_t c_h^j, w \bigr) \dt &=& \int_{t_1}^T  \bigl(\overline{\hat{c}}^j {\overline{q}^I}^j, w_h \bigr) - B(\overline{c_h}^j, w_h; \breve{u}_h^j) \dt\\  \nonumber
& \lesssim & \int_0^T (1 + \|\breve{u}_h^j\| ) (1 + \|\breve{u}_h^j\|_{\Hdiv}^\eoh ) \, | c_h^j |_{u_h^j} \, \| w \|_{\Wpm} \dt\\
& \lesssim & \| w \|_{L^2(0,T; \Wpm)}
\end{eqnarray*}
one completes the proof.   \end{proof}

\begin{theorem}\label{dg_limits_1}
Let $(u_i, p_i, c_i)_{i \in \dN}$ be a sequence of numerical solutions with $(\tilde{h}_i, \tilde{k}_i) \to 0$ as $i \to \infty$. Then there exists $c \in L^2(0, T; H^1(\O)) \cap H^1(0,T;\Wpm^{*})$ such that, after passing to a subsequence, $c_i \to c $ in $L^{2}(\OT)$, $\partial_{t} \tilde{c}_i \rightharpoonup \partial_{t} c$ in $L^{2}(0,T;\Wpm^{*})$ and $\nabla c_i \rightharpoonup \nabla c$ in $L^2(0,T;H^{-1}(\O))$. If $c_i^0, c_i^1 \to c_0$ in $\Wpm^{*}$ then $c$ satisfies {\rm (W3)}.
\end{theorem}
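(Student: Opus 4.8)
The plan is to combine the mesh- and time-step-independent bounds of Theorems~\ref{spatial_stab} and~\ref{time_stab} with a discrete Aubin--Lions type compactness argument, in the spirit of~\cite{BartelsJensenMueller08}; the only genuinely new point is that, under Crank--Nicolson, the quantities carrying the bounds are the midpoint values $\overline{c_h}$ and the extrapolated velocities $\breve{u}_h$, and one must check that they feed into that machinery exactly as the implicit-Euler iterates of~\cite{BartelsJensenMueller08} do. I would first record the uniform bounds. By Theorem~\ref{spatial_stab}, $\breve{u}_i$ is bounded in $L^\infty(0,T;\Hdiv)$ and, since $\bD\gtrsim d_{\circ}$, the seminorm $|\cdot|_{\breve{u}_h}$ is uniformly coercive; the energy bound~\eqref{bound_c}, together with the startup bound on $\|\phi^{1/2}c_h^1\|$, gives $c_i$ bounded in $L^\infty(0,T;L^2(\O))$ and $\int_{t_1}^{T}|\overline{c_h}|_{\breve{u}_h}^2\dt$ bounded, whence $\|\nablah\overline{c_h}\|_{L^2(\OT)}$ and the $L^2$-in-time jump seminorm $\|\sigma[\overline{c_h}]\|_{\mEi}$ are bounded; Theorem~\ref{time_stab} supplies $\|d_tc_h\|_{L^2(t_1,T;\Wpm^*)}\lesssim1$. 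Because $c_h^j-c_h^{j-1}=k_j\,d_tc_h^j$, the stepwise-in-time representatives $c_h$, $\overline{c_h}$ and $\tilde{c}_h$ are mutually within $O(\tilde{k}_i)$ in $L^2(0,T;\Wpm^*)$, so the compactness argument may be carried out on the midpoint sequence $\overline{c_h}$ --- which, unlike $c_h^j$ itself under Crank--Nicolson, carries the broken-$H^1$-with-jumps bound, and whose discrete increment $\overline{c_h}^j-\overline{c_h}^{j-1}=\tfrac12(c_h^j-c_h^{j-2})$ is again $\Wpm^*$-controlled by $d_tc_h$.

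Next I would pass to the limit. With a mesh-independent broken-$H^1$-plus-jump bound on $\overline{c_h}$ and the $\Wpm^*$-bound on its discrete increment, the discrete Aubin--Lions / Kolmogorov argument of~\cite{BartelsJensenMueller08} gives, after a further subsequence, a function $c\in L^2(0,T;L^2(\O))$ with $\overline{c_h}\to c$, hence $c_i\to c$, strongly in $L^2(\OT)$, while $\partial_t\tilde{c}_i\rightharpoonup\partial_tc$ in $L^2(0,T;\Wpm^*)$ and $\nablah\overline{c_h}\rightharpoonup G$ in $L^2(\OT)^d$ along the same subsequence. Since the penalty makes $\sum_{E\in\mEi}h_E^{-1}\|[\overline{c_h}]\|_E^2$ bounded in $L^2$ in time while $\tilde{h}_i\to0$, the jumps $\|[\overline{c_h}]\|_{\mEi}$ vanish in $L^2$ in time, and comparing $\overline{c_h}$ with a conforming (averaged) interpolant shows $G=\nabla c$; hence $c\in L^2(0,T;H^1(\O))$, which together with $\partial_tc\in L^2(0,T;\Wpm^*)$ gives the asserted regularity. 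Finally, $c_i$ is bounded in $L^2(\OT)$, so $\nabla c_i$ is bounded in $L^2(0,T;H^{-1}(\O))$; combined with $c_i\to c$ in $L^2(\OT)$ this forces $\nabla c_i\rightharpoonup\nabla c$ in $L^2(0,T;H^{-1}(\O))$.

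It remains to verify~(W3) when $c_i^0,c_i^1\to c_0$ in $\Wpm^*$. The interpolant $\tilde{c}_i$ is continuous in time with values in a finite-dimensional space and $\tilde{c}_i(\delta)=c_i^1+\int_{t_1^{(i)}}^{\delta}\partial_t\tilde{c}_i\dt$ for $\delta\ge t_1^{(i)}$. Fixing $\delta>0$ and passing to the limit along the subsequence (so that eventually $t_1^{(i)}<\delta$), using $c_i^1\to c_0$ in $\Wpm^*$, the convergence $\partial_t\tilde{c}_i\rightharpoonup\partial_tc$ in $L^2(0,T;\Wpm^*)$, and $\|\tilde{c}_i-c_i\|_{L^2(0,T;\Wpm^*)}=O(\tilde{k}_i)$ to identify the weak $\Wpm^*$-limit of $\tilde{c}_i(\delta)$ with $c(\delta)$ (for a.e.\ $\delta$, hence for all $\delta$ since $c\in C([0,T];\Wpm^*)$), one obtains $c(\delta)=c_0+\int_0^{\delta}\partial_tc\dt$ for every $\delta\in[0,T]$, so $c(0)=c_0$ in $\Wpm^*$, which is~(W3).

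I expect the main obstacle to be the $L^2(\OT)$-compactness under low regularity: since $\bD$ is unbounded, growing like $d_{\circ}(1+|u|)$, the spatial oscillations of $\overline{c_h}$ are controlled only through the $B_d$-seminorm and the interior penalty, not through a genuine mesh-independent $H^1$ bound, and one must in addition transport the compactness from $\overline{c_h}$ to the nodal iterates $c_i$ even though Crank--Nicolson affords no $L^2(\O)$-control of $d_tc_h$. This is precisely the territory of the discrete compactness results of~\cite{BartelsJensenMueller08}; the work specific to the present linearised scheme is the bookkeeping of the first paragraph, namely that $\overline{c_h}$ and $\breve{u}_h$ may everywhere replace the implicit-Euler iterates there.
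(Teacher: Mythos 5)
Your proposal follows essentially the same route as the paper, which simply defers to the Aubin--Lions compactness argument of \cite{BartelsJensenMueller08} combined with the uniform bounds of Theorems~\ref{spatial_stab} and~\ref{time_stab}; your Crank--Nicolson bookkeeping (working with $\overline{c_h}$ and $\breve{u}_h$, the $O(\tilde{k}_i)$ comparison of $c_h$, $\overline{c_h}$ and $\tilde{c}_h$ in $L^2(0,T;\Wpm^*)$) and your explicit treatment of the initial condition address exactly the two points the paper identifies as the only new ingredients relative to \cite{BartelsJensenMueller08}. The one cosmetic difference is that for the spatial $L^2$-compactness of the nonconforming iterates the paper invokes the embedding $\mS^{\pds}(\mT_i) \hookrightarrow [{\rm BV}(\O) \cap L^4(\O), L^4(\O)]_{1/2} \hookrightarrow L^2(\Omega)$, whereas you obtain it by comparison with a conforming averaged interpolant using the vanishing of the penalised jumps; both are standard devices for the same step.
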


The proof is, up to the treatment of the initial conditions, exactly as in \cite{BartelsJensenMueller08}. It is based on the Aubin-Lions theorem and the embedding
\[
\mS^{\pds}(\mT_i) \hookrightarrow [{\rm BV}(\O) \cap L^4(\O), L^4(\O)]_{1/2} \hookrightarrow L^2(\Omega),
\]
where $[\cdot,\cdot]_{\theta}$ denotes the complex method of interpolation.

\begin{samepage}
\begin{theorem}\label{contu}
Let $(u_i, p_i, c_i)_{i \in \dN}$ be numerical solutions with $(\tilde{h}_i, \tilde{k}_i) \to 0$ and $c_i \to c$ in $L^{2}(\OT)$ as $i \to \infty$. There exists $u \in L^\infty(0, T; \HNdiv)$ and $p\in L^\infty(0, T; L^2_0(\O))$ such that, after passing to a subsequence, $u_i \to u$ in $\HNdiv$ and $p_i \to p$ in $L^2_0(\O)$ as $(\tilde{h}_i, \tilde{k}_i) \to 0$. Furthermore, $(u, p, c)$ satisfies {\rm (W1)}.
\end{theorem}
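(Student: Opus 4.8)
The plan is to start from the uniform bounds of Theorem~\ref{spatial_stab}, extract weak limits, pass to the limit in the system \eqref{scheme_1} (which is linear in $(u_h,p_h)$), and then upgrade to strong convergence via an energy identity for $u$ and a discrete inf-sup argument for $p$; throughout, $u_i,p_i,c_i$ denote the associated stepwise-constant-in-time functions. By Theorem~\ref{spatial_stab} (and the inf-sup step in its proof) the functions $u_i$, $\div u_i$, $p_i$ are bounded in $L^\infty(0,T;L^2(\O))$ uniformly in $i$; hence, along a subsequence, $u_i \wc u$ and $p_i \wc p$ weakly-$*$ for some $u \in L^\infty(0,T;\HNdiv)$, $p \in L^\infty(0,T;L^2_0(\O))$ (lower semicontinuity of the norms). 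After a further subsequence $c_i \to c$ a.e.\ in $\OT$, so by boundedness and continuity of $\mu,\rho$ (data qualification) and dominated convergence $\mu(c_i)\to\mu(c)$ in every $L^q(\OT)$, $q<\infty$, and $\rho(c_i)\,g\to\rho(c)\,g$ in $L^2(\OT)$. The second line of \eqref{scheme_1} reads $\div u_i = \Pi_p^{j(\cdot)}(q^I-q^P)^{j(\cdot)}$, with $\Pi_p^{j}$ the $L^2(\O)$-projection onto $\mS_p^{j}$; since $\tilde h_i\to0$ and the time samples of $q^I-q^P$ converge to $q^I-q^P$ in $L^2(\OT)$, we get $\div u_i \to q^I-q^P$ strongly in $L^2(\OT)$.

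To establish {\rm (W1)}, fix smooth $v\in\HNdiv$, $q\in L^2_0(\O)$, and let $v^i(t)$, $q^i(t)$ be the Raviart--Thomas (Fortin) interpolant of $v$ and the $L^2$-projection of $q$ onto $\mS_u^{j(t)}$, $\mS_p^{j(t)}$; by (M1), the commuting-diagram property of the RT interpolant and $\tilde h_i\to0$, $v^i\to v$ in $L^\infty(0,T;\Hdiv)$ and $q^i\to q$ in $L^\infty(0,T;L^2(\O))$. Testing \eqref{scheme_1} with $v^i(t)$, $q^i(t)$, multiplying by $\varphi\in\sD(0,T)$ and integrating over $(0,T)$, every term converges: $\mu(c_i)\K^{-1}v^i\to\mu(c)\K^{-1}v$ and $\div v^i\to\div v$ strongly in $L^2(\OT)$ and pair against the weak limits $u$, $p$; $\rho(c_i)g\to\rho(c)g$ and $v^i\to v$ are both strong; $\div u_i\rightharpoonup\div u$ pairs against the strong limit $q^i$, while the source data converge strongly. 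Density of smooth functions in $\HNdiv$ and $L^2_0(\O)$ yields {\rm (W1)} for a.e.\ $t$; in particular the second equation gives $\div u = q^I-q^P$, so $\div u_i\to\div u$ strongly.

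For the strong convergence I would test the first line of \eqref{scheme_1} with $v_h=u_i(t)$ and integrate in time,
\[
\int_0^T \bigl(\mu(c_i)\,\K^{-1}u_i, u_i\bigr)\dt = \int_0^T \bigl(p_i,\div u_i\bigr)\dt + \int_0^T \bigl(\rho(c_i)\,g, u_i\bigr)\dt .
\]
The right-hand side is a sum of weak--strong pairings and converges to $\int_0^T (p,\div u)+(\rho(c)g,u)\dt$, which by {\rm (W1)} with $v=u(t)$ equals $\int_0^T (\mu(c)\K^{-1}u,u)\dt$. Expanding $\int_0^T (\mu(c_i)\K^{-1}(u_i-u),u_i-u)\dt$ and using this energy limit together with $\mu(c_i)\K^{-1}u\to\mu(c)\K^{-1}u$ in $L^2(\OT)$, the left-hand side tends to $0$; uniform coercivity of $\mu(c_i)\K^{-1}$ then gives $u_i\to u$ in $L^2(\OT)^d$, hence in $L^2(0,T;\Hdiv)$. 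For $p$, use the uniform discrete inf-sup for $\div\colon\mS_u^j\to\mS_p^j$ (surjectivity of the RT divergence with bounded right inverse, uniform under (M1)): with $r_i:=p_i-\Pi_p^{j(\cdot)}p\in\mS_p^{j(\cdot)}$, pick $w^i\in\mS_u^{j(\cdot)}$, $\div w^i=r_i$, $\|w^i\|_{\Hdiv}\lesssim\|r_i\|$; subtracting {\rm (W1)} tested against $w^i$ from \eqref{scheme_1} tested against $w^i$ gives $\|r_i\|^2\lesssim(\|\mu(c_i)\K^{-1}u_i-\mu(c)\K^{-1}u\|+\|\rho(c_i)g-\rho(c)g\|)\,\|w^i\|$, and integration in $t$ shows $r_i\to0$ in $L^2(\OT)$, so $p_i\to p$ in $L^2(0,T;L^2_0(\O))$.

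I expect the main obstacle to be the nonlinear Darcy term $(\mu(c_i)\,\K^{-1}u_i,\cdot)$: only $\mu(c_i)$ converges strongly while $u_i$ converges merely weakly, and $\mu,\K$ may be discontinuous, so the product does not \emph{a priori} converge to $\mu(c)\K^{-1}u$. The way around it is to keep one strongly convergent factor at each stage --- first the smooth interpolated test function $v^i$, which yields {\rm (W1)} from the mere weak convergence of $u_i$, and then {\rm (W1)} itself, which closes the energy identity $\lim\int(\mu(c_i)\K^{-1}u_i,u_i)=\int(\mu(c)\K^{-1}u,u)$ so that coercivity may be invoked. It is essential that {\rm (W1)} is derived before the energy step, so the argument is not circular; the uniform RT inf-sup/Fortin estimates and the $L^2(\OT)$-convergence of the time-sampled sources are standard and enter without difficulty.
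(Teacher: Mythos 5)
Your argument is correct, but it is not the route the paper takes: the paper dispatches this theorem in one line by invoking Strang's lemma, with the details deferred to \cite{BartelsJensenMueller08}. In that approach one starts from the fact that the continuous mixed Darcy system with coefficient $\mu(c)$ is well-posed (Babu\v{s}ka--Brezzi), and treats \eqref{scheme_1} as a consistent perturbation of it; the resulting quasi-optimality estimate bounds $\|u-u_i\|_{\Hdiv}+\|p-p_i\|$ by the best-approximation error of the Raviart--Thomas pair plus consistency terms controlled by $\|(\mu(c_i)-\mu(c))\K^{-1}u\|$ and $\|(\rho(c_i)-\rho(c))g\|$, both of which vanish by dominated convergence once $c_i\to c$ a.e. That yields strong convergence in a single step, with no need for the weak-limit extraction or the energy identity. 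Your compactness route --- weak-$*$ limits from Theorem \ref{spatial_stab}, passage to the limit against Fortin-interpolated test functions to obtain (W1), then the identity $\lim_i\int_0^T\bigl(\mu(c_i)\K^{-1}u_i,u_i\bigr)\dt=\int_0^T\bigl(\mu(c)\K^{-1}u,u\bigr)\dt$ plus uniform coercivity for $u$, and the discrete inf-sup for $p$ --- is longer but sound; you correctly identify that the only delicate point is keeping one strongly convergent factor in each pairing, and your ordering (derive (W1) first, close the energy identity second) avoids circularity. The trade-off is that Strang's lemma presupposes solvability of the limiting Darcy problem and gives a quantitative, pointwise-in-$t$ error bound (which is closer to how the theorem is phrased), whereas your argument constructs $(u,p)$ as limits and obtains (W1) as a by-product, at the price of only $L^2$-in-time convergence.
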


\begin{proof} Use Strang's lemma, for details see \cite{BartelsJensenMueller08}.   \end{proof}
\end{samepage}

We interpret $\breve{u}_i$ as piecewise constant function in time, attaining in $(t_{j-1}, t_j]$ the value ${\textstyle \frac{3}{2}} u(t^{j-1}) - {\textstyle \frac{1}{2}} u(t^{j-2})$.

\begin{theorem} \label{weaksol}
Let $(u_i, p_i, c_i)_{i \in \dN}$ be a sequence of numerical solutions with $(\tilde{h}_i, \tilde{k}_i) \to 0$ as $i \to \infty$ and let $u \in L^\infty(0,T;H_N(\div;\O))$ and $c \in L^2(0, T; H^1(\O)) \cap H^1(0,T;\Wpm^{*})$ be a limit of $(u_i, c_i)_i$ in the sense of Theorems \ref{dg_limits_1} and \ref{contu}. Then $(u,c)$ satisfies {\rm (W2)}.
\end{theorem}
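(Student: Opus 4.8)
The plan is to test the discrete concentration equation \eqref{scheme_2} with interpolants of a smooth test function, sum over the time steps and pass to the limit term by term, using the bounds and compactness collected in Theorems~\ref{spatial_stab}--\ref{contu}. First I would fix $w \in \sD(0,T;\Wpm)$. Because \eqref{bound_c} implies, by weak compactness, that $\nabla c$ and $(1+|u|)^{1/2}\nabla c$ lie in $L^2(\OT)$ (see the treatment of $G_i$ below), and because $\Wpm \hookrightarrow W^{1,4}(\O)$, the functional $w \mapsto$ (left-hand side of (W2)) is continuous on $\sD(0,T;\Wpm)$; hence it suffices to establish (W2) for $w$ that is in addition smooth in space, so that $\nabla w \in L^\infty(\OT)$. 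I would then pick quasi-interpolants $w_i^j \in \mS_c^j$ of $w(t_j,\cdot)$ (e.g.\ of Scott--Zhang type) such that $w_i^j \to w(t_j,\cdot)$ and $\nablah w_i^j \to \nabla w(t_j,\cdot)$ in every $L^p(\O)$, with $W^{1,\infty}(\O)$-norms bounded uniformly in $i$ and $j$, and with jumps $[w_i^j]_\mEi$ of order $\tilde{h}_i$. Choosing $w_h = w_i^j$ in \eqref{scheme_2}, multiplying by $k_j$ and summing over $j = 2, \dots, M$ yields a discrete version of (W2); since the time support of $w$ eventually lies strictly inside $(t_1,T)$, the contributions at the temporal endpoints disappear for large $i$.

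For the discrete time-derivative term I would write $\sum_j k_j(\phi\, d_t c_i^j, w_i^j) = \int_{t_1}^T \langle \phi\, \partial_t\tilde{c}_i,\, w_i(\cdot)\rangle\dt$, where $w_i$ is the piecewise-constant-in-time extension of $(w_i^j)_j$, replace $w_i(\cdot)$ first by $w(t_j,\cdot)$ (the error is bounded by the interpolation estimate times $\|\partial_t\tilde{c}_i\|_{L^2(t_1,T;\Wpm^*)} \lesssim 1$, using Theorem~\ref{time_stab}) and then by $w(\cdot)$ (error of order $\tilde{k}_i$), and conclude with the weak convergence $\partial_t\tilde{c}_i \rightharpoonup \partial_t c$ in $L^2(0,T;\Wpm^*)$ from Theorem~\ref{dg_limits_1} together with integration by parts in time, obtaining $-\int_0^T(\phi\, c,\partial_t w)\dt$. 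The right-hand side $\sum_j k_j(\overline{\hat{c}}^j\,{\overline{q}^I}^j, w_i^j)$ tends to $\int_0^T(\hat{c}\,q^I, w)\dt$ by convergence of the data and of $(w_i^j)$.

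The convective term $B_{cq}$ I would handle after recording two facts. First, the extrapolated velocities $\breve{u}_i$, read as piecewise-constant-in-time fields, converge to $u$ strongly in $L^2(\OT)$: by Theorem~\ref{contu} the $u_i$ do, and the at most two backward time shifts inherent in the extrapolation are controlled by the $L^2(\OT)$-continuity of time translation and $\tilde{k}_i \to 0$; for the same reason $\overline{c_i} \to c$ strongly in $L^2(\OT)$. Second, by \eqref{bound_c} and $1+|\breve{u}_i| \ge 1$ the broken gradient $\nablah\overline{c_i}$ is bounded in $L^2(\OT)$, and since it differs from the distributional gradient only by face jumps that vanish, $\nablah\overline{c_i} \rightharpoonup \nabla c$ weakly in $L^2(\OT)$. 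Combining the strong convergence of $\breve{u}_i$, $\overline{c_i}$, $w_i^j$, $\nablah w_i^j$ with the weak convergence of $\nablah\overline{c_i}$, the volume parts of $B_{cq}$ pass to the limit; the interface terms weighted by $[w_i^j]$ are of order $\tilde{h}_i$, and those weighted by $[\overline{c_i}]$ are absorbed by $|\overline{c_i}|_{\breve{u}_i}$. An elementwise integration by parts, using $\div u = q^I - q^P$ to rebalance the zeroth-order contributions as in \cite{BartelsJensenMueller08}, then gives $\int_{t_1}^T B_{cq}(\overline{c_i}, w_i; \breve{u}_i)\dt \to \int_0^T (u\cdot\nabla c, w) + (q^I c, w)\dt$.

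The genuinely delicate step is the diffusion term $B_d$, because $\bD$ is unbounded. Its penalty and numerical-flux contributions vanish as $\tilde{h}_i \to 0$ by the estimates recalled in the proof of Theorem~\ref{time_stab} --- in particular $\|\sigma[w_i]\|_\mEi^2 \lesssim (1+\|\breve{u}_i\|)\tilde{h}_i^{1/2}\|w\|_{\Wpm}^2$ --- together with the $|\overline{c_i}|_{\breve{u}_i}$-control of $[\overline{c_i}]$. For the principal term $\int_{t_1}^T (\bD_h^j(\breve{u}_i)\nablah\overline{c_i},\, \nablah w_i)\dt$ I would factor the integrand as $\langle A_i, G_i\rangle$ with $G_i := (1+|\breve{u}_i|)^{1/2}\nablah\overline{c_i}$ and $A_i := (1+|\breve{u}_i|)^{-1/2}\,\bD_h^j(\breve{u}_i)\,\nablah w_i$. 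By \eqref{bound_c}, $G_i$ is bounded in $L^2(\OT)$; passing to a subsequence along which $\breve{u}_i \to u$ almost everywhere and using that $(1+|\breve{u}_i|)^{1/2} \to (1+|u|)^{1/2}$ in $L^2(\OT)$ (a consequence of $|\breve{u}_i| \to |u|$ in $L^2(\OT)$), a generalised dominated convergence argument identifies the weak limit, $G_i \rightharpoonup (1+|u|)^{1/2}\nabla c$ in $L^2(\OT)$; this is where the regularity $(1+|u|)^{1/2}\nabla c \in L^2(\OT)$ used in the first paragraph comes from. For $A_i$, the growth bound on $\bD$ gives $|A_i| \le d^{\circ}(1+|\breve{u}_i|)^{1/2}|\nablah w_i| \le C(1+|\breve{u}_i|)^{1/2}$ since $\nabla w \in L^\infty(\OT)$, while $A_i \to (1+|u|)^{-1/2}\bD(u)\nabla w$ almost everywhere by continuity of $\bD$, by $\Pi_\mT \to {\rm id}$, and by $\nablah w_i \to \nabla w$; the same generalised dominated convergence theorem, with the same $L^2(\OT)$-convergent majorants, upgrades this to strong convergence of $A_i$ in $L^2(\OT)$. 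The product of a strongly and a weakly $L^2$-convergent sequence then yields $\int_{t_1}^T (\bD_h^j(\breve{u}_i)\nablah\overline{c_i},\, \nablah w_i)\dt \to \int_0^T (\bD(u)\nabla c, \nabla w)\dt$. Assembling the four limits gives (W2) for smooth $w$, and the density argument of the first paragraph extends it to all $w \in \sD(0,T;\Wpm)$. I expect everything except this weighted-splitting and generalised-dominated-convergence argument for the principal diffusion term to be a routine adaptation of the implicit-Euler analysis in \cite{BartelsJensenMueller08}, the new extrapolated velocity $\breve{u}_i$ and Crank--Nicolson average $\overline{c_i}$ contributing only bookkeeping.
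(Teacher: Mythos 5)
Your proposal is correct and follows essentially the same route as the paper's (very terse) proof: test \eqref{scheme_2} with interpolants of a smooth-in-space test function, pass to the limit term by term using the weak convergence of the (lifted) broken gradient of $c_i$ in $L^2(\OT)^d$ against a strongly $L^2$-convergent diffusion factor built from $\breve{u}_i \to u$ and $\nablah v_i \to \nabla v$, defer the convective and flux/penalty terms to the estimates of \cite{BartelsJensenMueller08}, and finish by density of smooth functions in $\sD(0,T;\Wpm)$. Your redistribution of the weight $(1+|\breve{u}_i|)^{\pm 1/2}$ between the two factors is only a bookkeeping variant of the same weak--strong pairing that the paper invokes.
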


\begin{proof} Let $v \in \sD(0,T; \sC^\infty(\O))$ and $v_i(t) \in \mS_c^j$ an approximation to $v(t)$ in $(t_{j-1}, t_j]$. Using the strong convergence of $(\nablah v_i)_i$ in $L^\infty(\OT)^d$ and the weak convergence of the lifted gradient of $c_i$ in $L^2(\OT)^d$, we find
\begin{eqnarray} \nonumber \label{approxgraduse}
\int_{t_1}^T \!\! \bigl( \nabla c, \bD(u) \nabla v \bigr) \dt = \lim_{i \to \infty} \int_{t_1}^T \bigl(\nablah c_i, \bD_h(\breve{u}_i) \nablah v_i\bigr) - \bigl( [c_i], \{ \bD_h(\breve{u}_i) \nablah v_i \}\bigr)_{\mE_\O} \dt.
\end{eqnarray}
As in \cite{BartelsJensenMueller08} it follows that $B_d(c_i, v_i; \breve{u}_i)$ coincides in the limit with $\bigl( \nabla c, \bD(u) \nabla v \bigr)$. One can also conclude by adapting \cite{BartelsJensenMueller08} that
\[
\int_{t_1}^T \bigl(u \cdot \nabla c, v \bigr) + \bigl(q^I c, v \bigr) \dt = \lim_{i \to \infty} \int_{t_1}^T B_{cq}(c_i, v_i; \breve{u}_i) \dt.
\]
One arrives at
\begin{eqnarray*}
&& \hphantom{\lim_{i \to \infty}} \int_{t_1}^T - \bigl( \phi \, c, \partial_t v \bigr) + \bigl(\bD(u) \nabla c, \nabla v \bigr) + \bigl(u \cdot \nabla c, v \bigr) + \bigl(q^I c, v \bigr) - \bigl(\hat{c} q^I, v \bigr) \dt\\
&=& \lim_{i \to \infty} \int_{t_1}^T \bigl( \phi \, d_t c_h^j, w_h \bigr) + B(\overline{c_h}^j, w_h; \breve{u}_h^j) - \bigl(\overline{\hat{c}}^j {\overline{q}^I}^j, w_h \bigr) \dt = 0.
\end{eqnarray*}
Hence (W2) is satisfied for $v \in \sD(0,T; \sC^\infty(\O))$. The extension to $\sD(0,T; \Wpm)$ follows from boundedness and density of smooth functions.
\end{proof}

\begin{figure}
\begin{center}
\includegraphics[height=26ex]{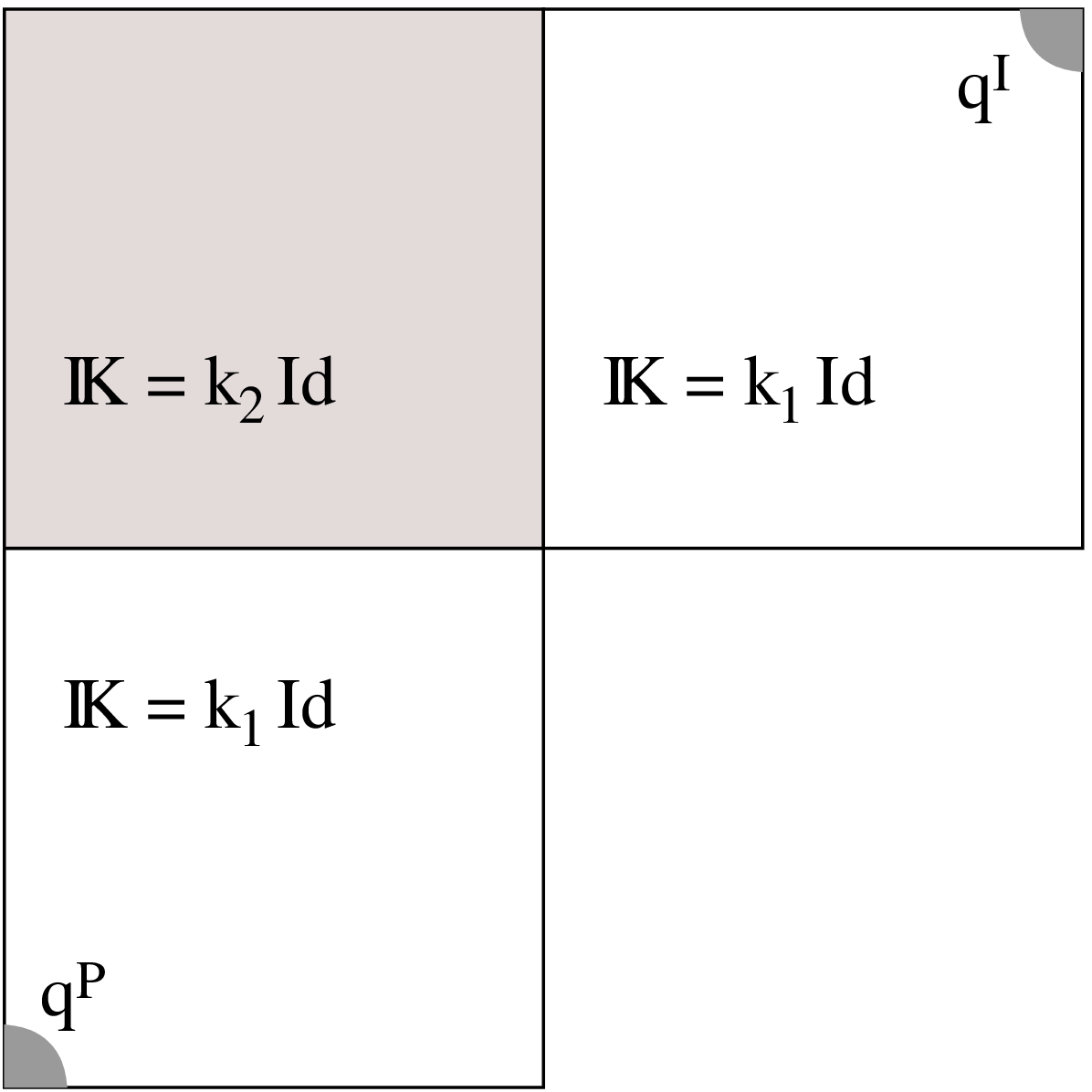} \hspace{1cm}
\includegraphics[height=30ex,trim=0 2 0 90, clip]{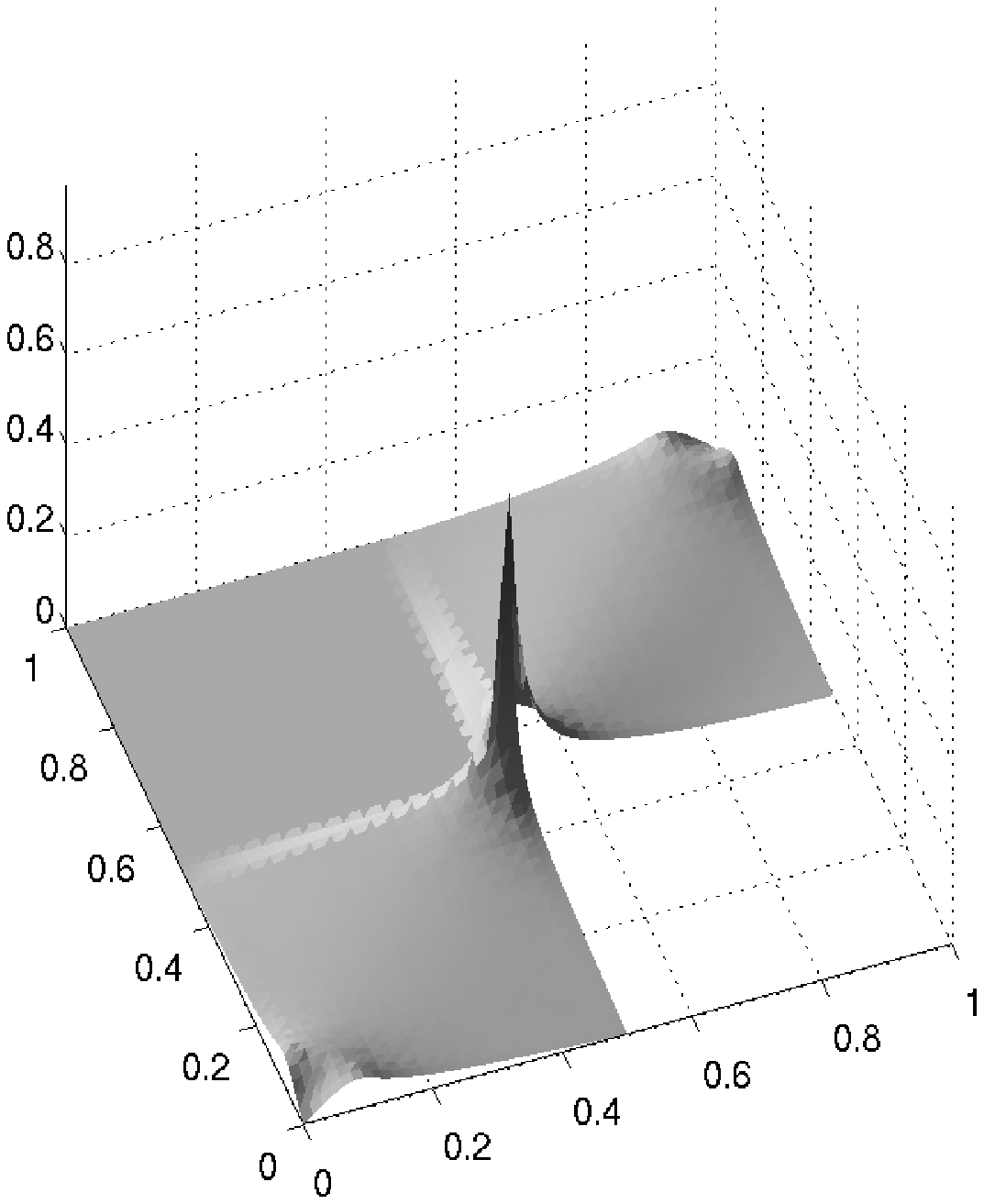} \hspace{1cm}
\caption{\emph{Example 1: } Left: computational domain; right: absolute value $|u_h|$ of the Darcy velocity at $t=1.0$ before any interaction between the concentration front and the corner singularity.
\label{fig:lshape-domain}}
\end{center}
\end{figure}

\section{Numerical Experiments}

\begin{figure}
\begin{center}
\includegraphics[width=0.30\textwidth]{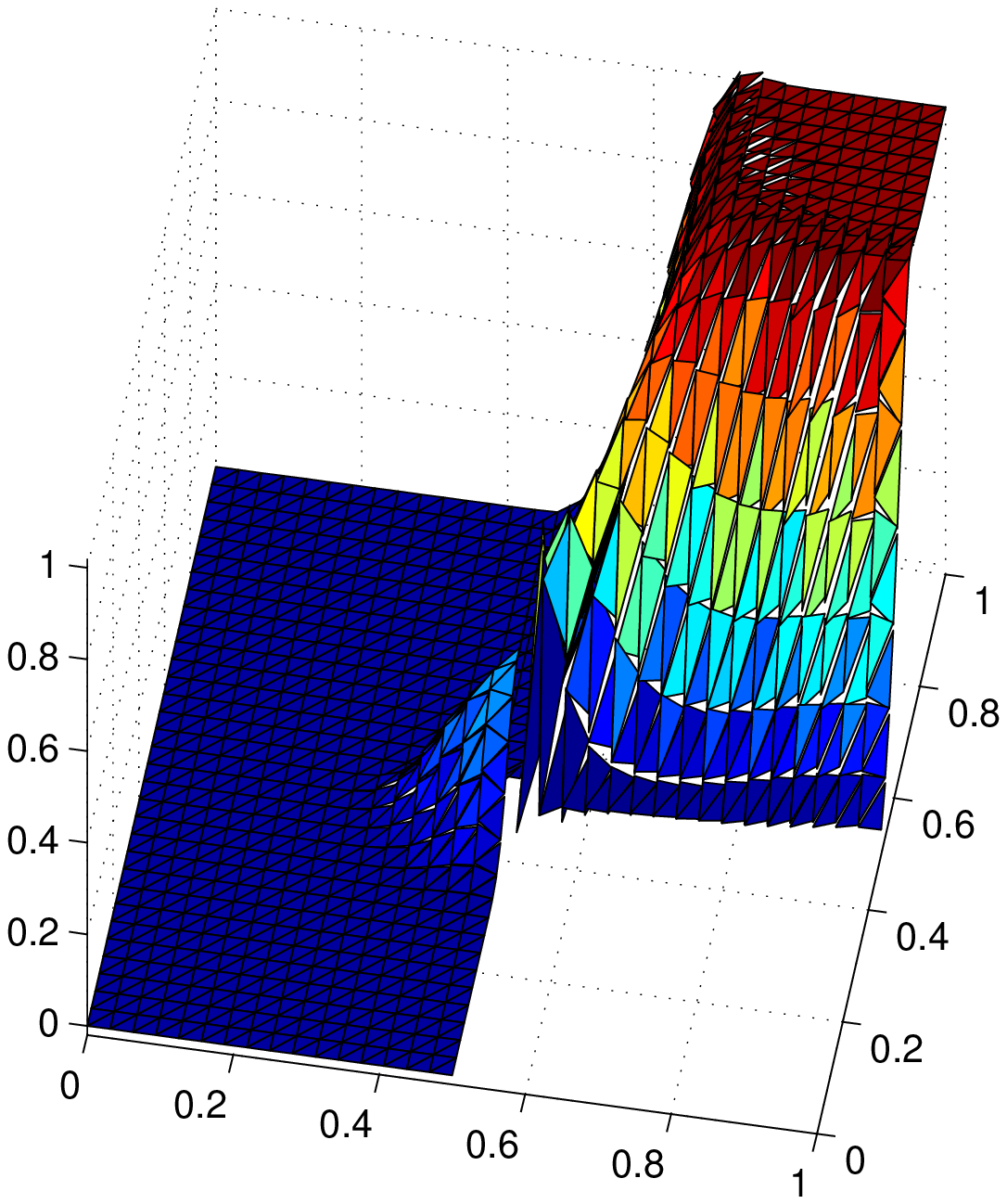}\qquad
\includegraphics[width=0.30\textwidth]{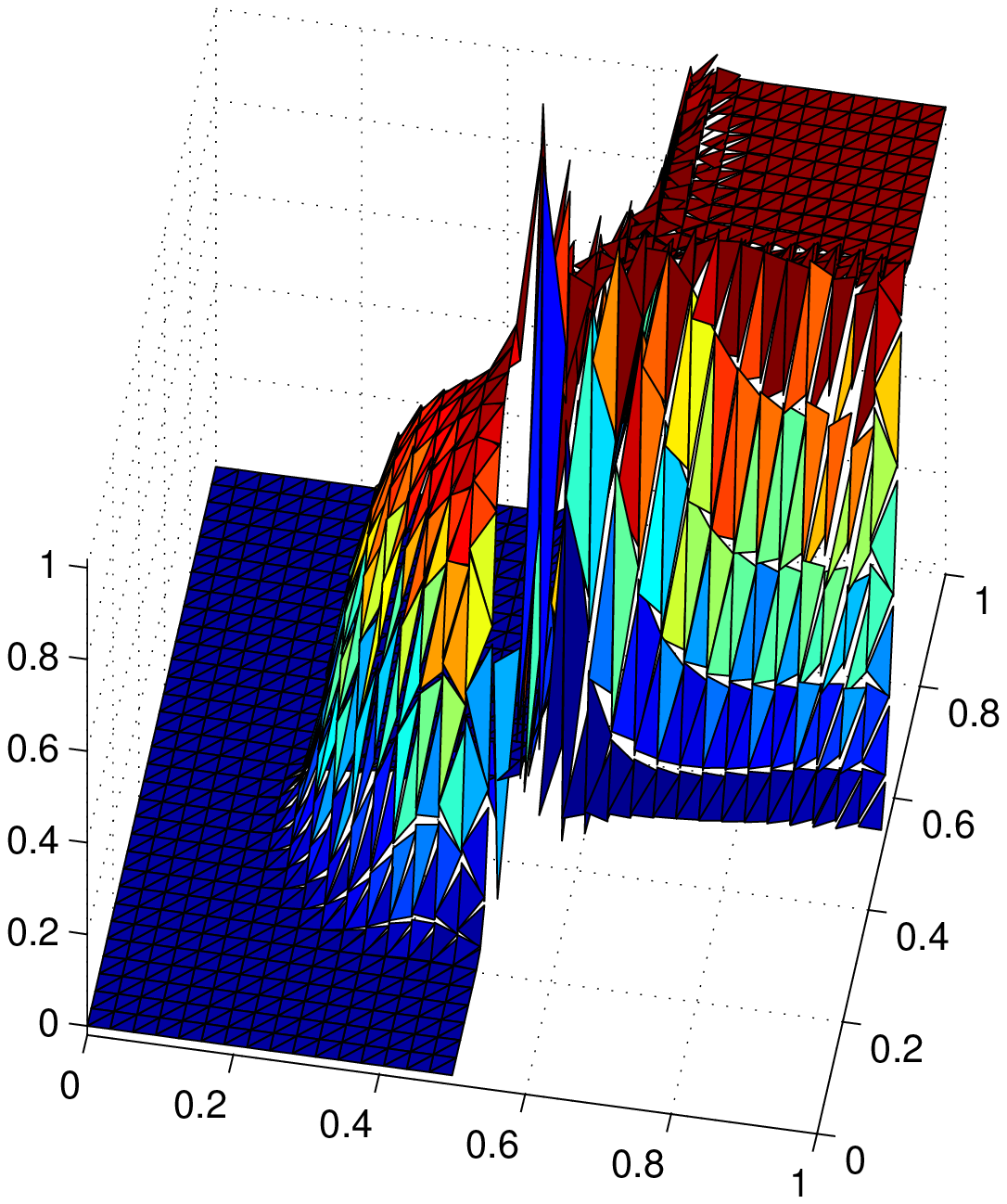}
\caption{Snapshots of $c_h$ at $t=1.5$ and $2.0$, computed with the Crank-Nicolson scheme.
\label{fig:lshape}}
\end{center}
\end{figure}

The numerical experiments are carried out in two space dimensions with the lowest-order method on a mesh which consists of shape-regular triangles without hanging nodes and which is not changed over time.
The diffusion--dispersion tensor takes the form
\begin{equation} \label{D_def}
  \bD(u,x) = \phi(x) \left( d_{m} {\rm Id} + |u| \, d_{\ell} \, E(u) + |u| \, d_{t} \left( {\rm Id} - \, E(u) \right) \right).
\end{equation}

\begin{figure}[ht]
\begin{center}
\includegraphics[width=0.3\textwidth]{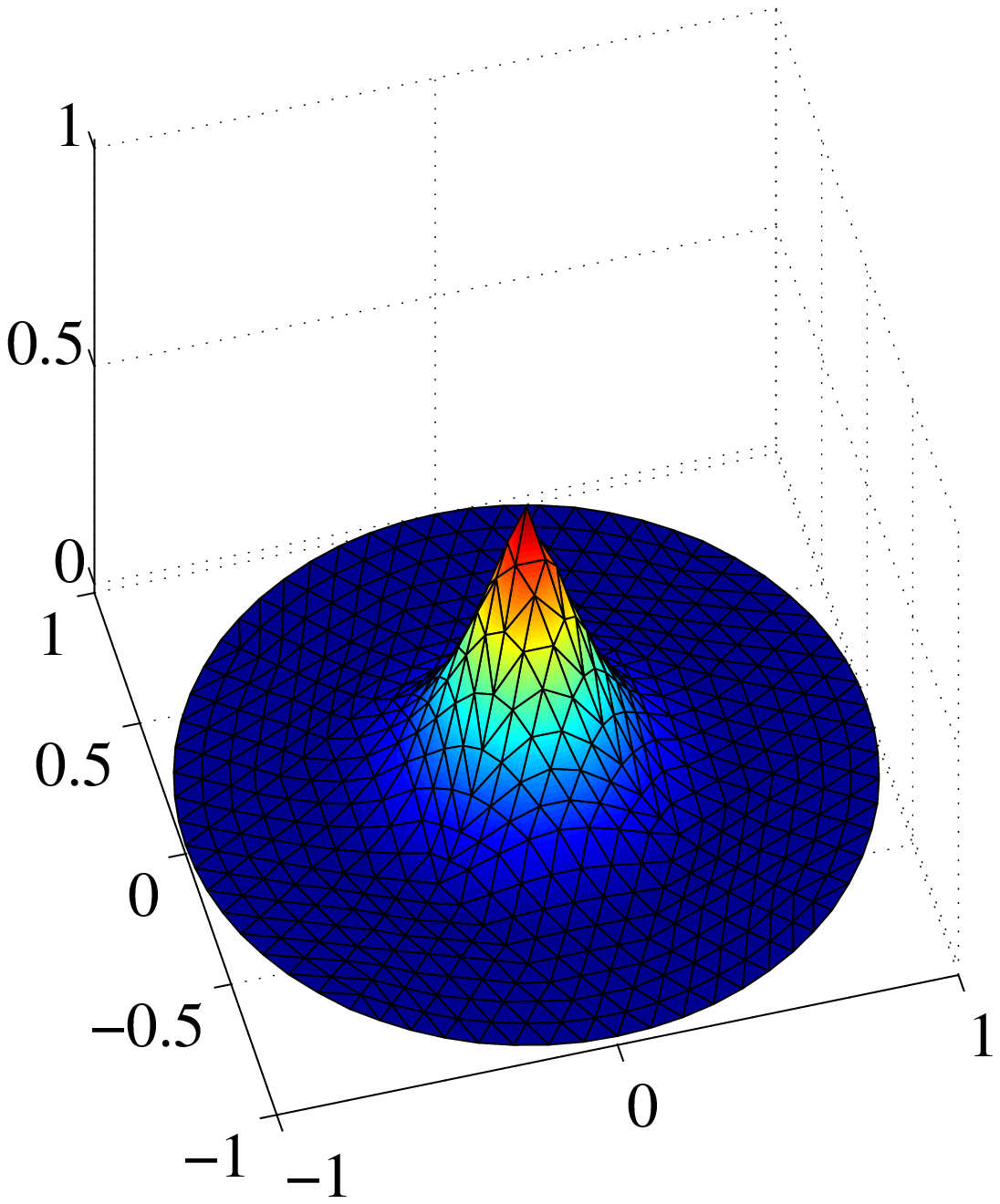}
\includegraphics[width=0.3\textwidth]{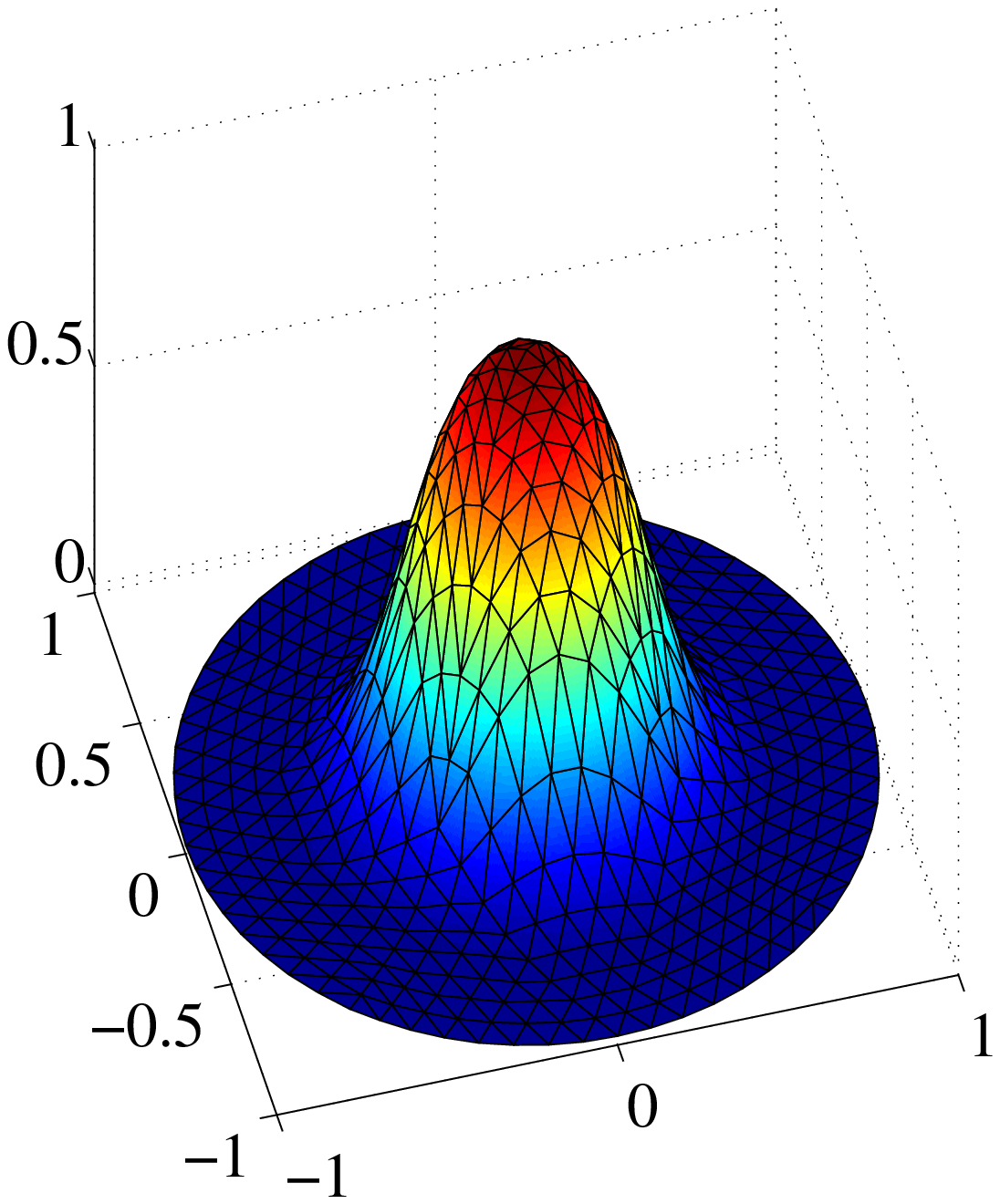}
\includegraphics[width=0.3\textwidth]{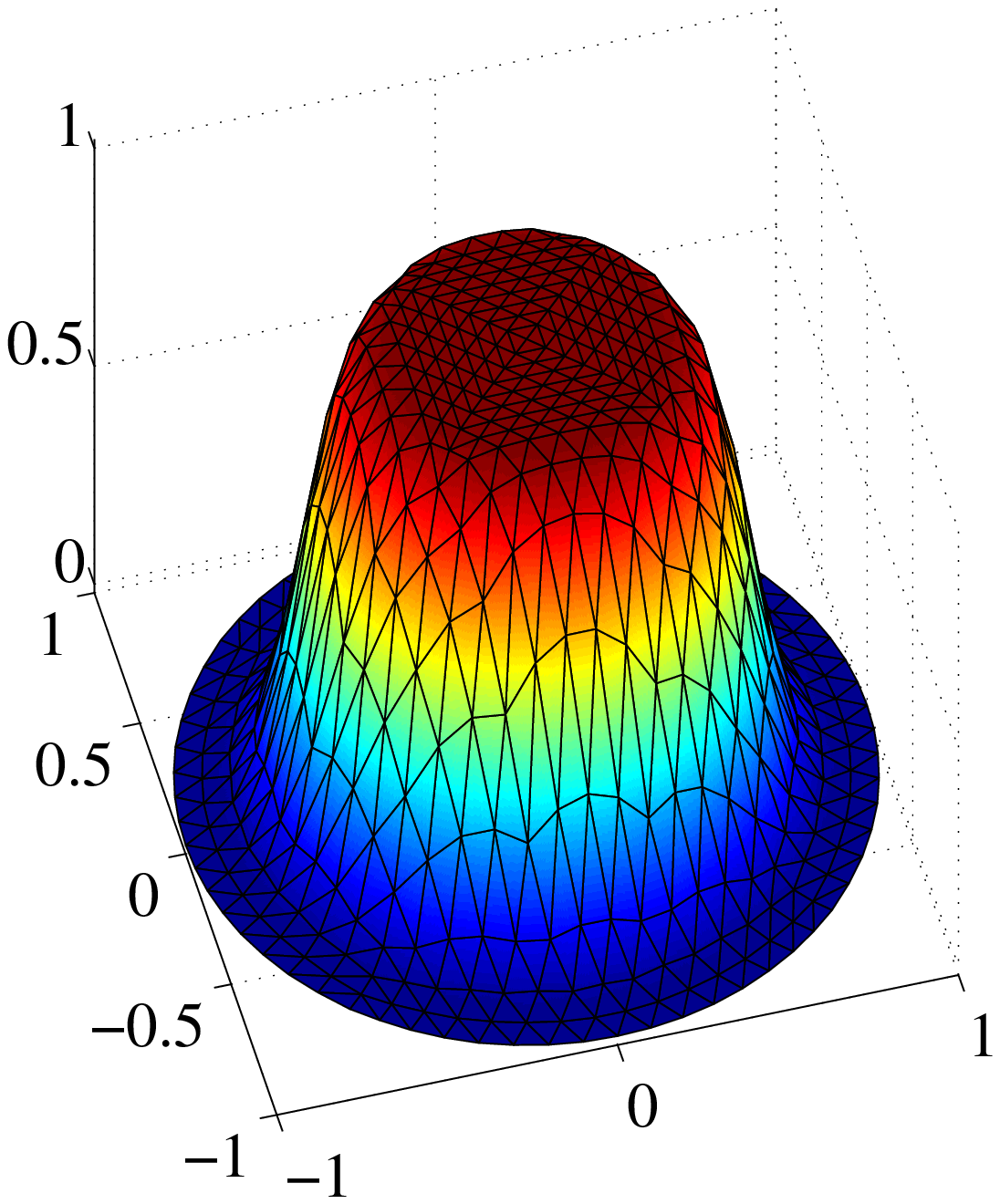}
\caption{\emph{Example 2: } Snapshots of the concentration $c_{\mathrm{ref}}$ at $t=0.25, 1.0$ and $3.0$.
\label{fig:ref_sol}}
\end{center}
\end{figure}

\begin{numexample}[Singular Velocities] \rm
To examine the effect of a singular velocity field caused by a discontinuous permeability distribution and a re-entrant corner we employ the L-shaped domain $\Omega$ and $\K$ with $k_1=0.1$ and $k_2=10^{-6}$ as depicted in Figure~\ref{fig:lshape-domain}. The injection and production wells are located at $(1,1)$ and $(0,0)$, respectively. The porous medium is almost impenetrable in the upper left quarter, forcing a high fluid velocity at the reentrant corner where the nearly impenetrable barrier is thinnest. This leads to a singularity $|u|\sim r^{-\alpha}$, where $r$ is the distance to the reentrant corner and $\alpha\approx 1$, cf.~\cite{BartelsJensenMueller08}. Figure \ref{fig:lshape} shows the concentration when the front passes the corner and at a later time. The solution $c_h$ contains steep fronts but shows only the localised oscillations that are characteristic for dG methods.
\end{numexample}

\begin{figure}[ht]
\begin{center}
\includegraphics[width=0.5\textwidth]{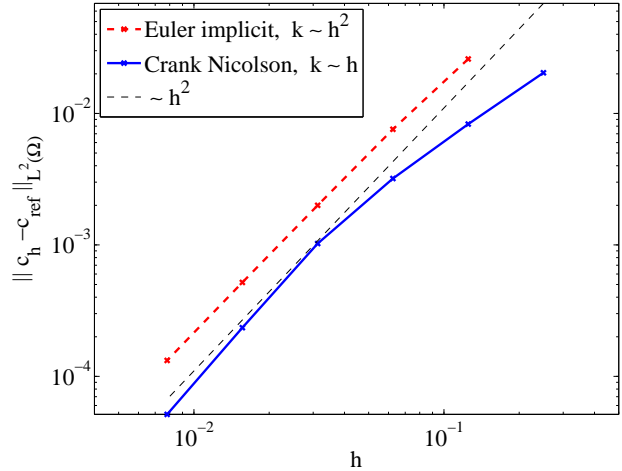}
\caption{Error $\|c_h -c_{\mathrm{ref}}\|_{L^2(\Omega)}$ of the implicit Euler method the Crank-Nicolson method at time $t=1$.
\label{fig:rates}}
\end{center}
\end{figure}

\begin{numexample}[Convergence rates] \rm
Convergence rates are determined by comparing the numerical solution $c_h$ to a reference solution $c_{\mathrm{ref}}$ that is computed with high accuracy on a one dimensional grid. More precisely, we set $\phi=1$, $\hat{c}=1$, $\K=1$ and $g=0$ and choose $\Omega$ to be the ball $B(0,1)\subset\dR^2$. Using polar coordinates $(r,\varphi)$, we choose $q^I = 4 \left( 1-r \right)^{6}$ and $q^P = \frac{4}{7} r^{6}$. Then the Darcy velocity only changes in the radial direction and is determined by an ODE, which has the nonnegative exact solution $u(r) = \frac{r}{7} \left( 3\,r^6 -24\,r^5 +70\,r^4 -112\,r^3 +105\,r^2 -56\,r +14 \right)$. Consequently, the concentration equation reduces to a linear parabolic equation in one space dimension. Figure \ref{fig:ref_sol} shows snapshots of the solution $c_{\mathrm{ref}}$ with $d_{m}=1.0\times 10^{-5}$, $d_{\ell}=4.0\times 10^{-4}$ and Figure \ref{fig:rates} shows that $L^2$ error of implicit Euler method is of order $O(h^2 + k)$ whereas the Crank-Nicolson reaches the order $O(h^2 + k^2)$.

\end{numexample}


\begin{thebibliography}{99}
\bibitem{BartelsJensenMueller08} {\sc S.~Bartels, M.~Jensen, R.~M\"uller}, {\em Discontinuous {G}alerkin finite element convergence for incompressible miscible displacement problems of low regularity}, to appear in SIAM Journal on Numerical Analysis, submitted December 2007 (also preprint HU-Berlin 2008 No. 2, \mbox{\url{www.mathematik.hu-berlin.de/publ/pre/2008/P-08-02.pdf}}).
\bibitem{Chen07} {\sc Z.~Chen}, {\em Reservoir simulation ({M}athematical techniques in oil recovery)}, SIAM , 2007.
\bibitem{Feng02} {\sc X.~Feng}, {\em Recent developments on modeling and analysis of flow of miscible fluids in porous media}, Fluid flow and transport in porous media, Contemp. Math. 295:229--24, 2002.
\bibitem{RiviereWalkington09} {\sc B.~Rivi{\`e}re, N.~Walkington}, {\em  Convergence of a Discontinuous Galerkin Method for the Miscible Displacement Equations Under Minimal Regularity}, preprint May 2009, (\mbox{\url{http://www.math.cmu.edu/~noelw/Noelw/Papers/RiWa09.pdf}}).
\bibitem{Thomee91} {\sc V.~Thom\'ee}, {\em {G}alerkin finite element methods for parabolic problems}, Springer Series in Computational Mathematics 25, 1997.
\end{thebibliography}
\end{document}